\let\savethebibliography=\thebibliography
\let\thebibliography=\savethebibliography
\def\ve#1{\mathchoice{\mbox{\boldmath$\displaystyle\bf#1$}}
{\mbox{\boldmath$\textstyle\bf#1$}}
{\mbox{\boldmath$\scriptstyle\bf#1$}}
{\mbox{\boldmath$\scriptscriptstyle\bf#1$}}}
\newcommand\vealpha{{\boldsymbol{\alpha}}}
\newcommand\Z{\mathbf Z}
\newcommand\N{\mathbf N}
\newcommand\R{\mathbf R}
\newcommand\Q{\mathbf Q}
\DeclareMathOperator{\ind}{ind}
\DeclareMathOperator{\sign}{sign}
\DeclareMathOperator{\cone}{cone}
\DeclareMathOperator{\interior}{int}
\DeclareMathOperator{\closure}{cl}
\DeclareMathOperator{\diagonal}{diag}
\let\boundary=\partial
\let\epsilon=\varepsilon
\newcommand{\DeclareBracket}[3]{
  \newcommand{#1}[2][]{%
  \ifthenelse%
  {\equal{##1}{}}%
  {\left#2##2\right#3}%
  {\csname ##1l\endcsname#2##2\csname ##1r\endcsname#3}}}    
\newcommand{\cractional}[1]{\left\{\!\left\{#1\right\}\!\right\}}
\newcommand\C{\mathbf C}
\newcommand\ifpdf
\newtheorem{theorem}{Theorem}%
\newtheorem{lemma}{Lemma}
\renewcommand*{\c@lemma}{\c@theorem}
\renewcommand*{\p@lemma}{\p@theorem}
\renewcommand*{\c@conjecture}{\c@theorem}
\renewcommand*{\p@conjecture}{\p@theorem}
\renewcommand*{\c@proposition}{\c@theorem}
\renewcommand*{\p@proposition}{\p@theorem}
\renewcommand*{\c@corollary}{\c@theorem}
\renewcommand*{\p@corollary}{\p@theorem}
\renewcommand*{\c@observation}{\c@theorem}
\renewcommand*{\p@observation}{\p@theorem}
\theoremstyle{definition}
\renewcommand*{\c@problem}{\c@theorem}
\renewcommand*{\p@problem}{\p@theorem}
\renewcommand*{\c@definition}{\c@theorem}
\renewcommand*{\p@definition}{\p@theorem}
\newtheorem{remark}{Remark}
\renewcommand*{\c@remark}{\c@theorem}
\renewcommand*{\p@remark}{\p@theorem}
\newtheorem{example}{Example}
\renewcommand*{\c@example}{\c@theorem}
\renewcommand*{\p@example}{\p@theorem}
\newtheorem{algorithm}{Algorithm}
\renewcommand*{\c@algorithm}{\c@theorem}
\renewcommand*{\p@algorithm}{\p@theorem}
\title[Primal parametric Barvinok algorithm]
{Computing parametric rational generating functions
  with a primal Barvinok algorithm}
\author{Matthias~K\"oppe}
\address{Matthias~K\"oppe: Otto-von-Guericke-Universit\"at Magdeburg, Department of
  Mathematics, Institute for Mathematical Optimization (IMO),
  Univer\-si\-t\"ats\-platz~2, 
  39106 Magdeburg, Germany} 
\email{mkoeppe@imo.math.uni-magdeburg.de}
\thanks{The first author was supported by a 2006/2007 Feodor Lynen Research Fellowship from the
  Alexander von Humboldt Foundation.  He also acknowledges the
  hospitality of Jes\'us De Loera and the Department of Mathematics of the
  University of California, Davis, where a part of this work was completed.}
\author{Sven Verdoolaege}
\address{
Sven Verdoolaege: Leiden Institute of Advanced Computer Science (\mbox{LIACS}),
Universiteit Leiden, Niels Bohrweg 1, 2333 CA Leiden, The Netherlands
}
\email{sverdool@liacs.nl}
\date{$\relax$Revision: 1.99 $ - \ $Date: 2007/08/27 22:02:57 $ $}
\subjclass[2000]{05A15; 52C07; 68W30}
\keywords{Rational generating functions; vector partition functions;
  parametric counting functions; 
  triangulations; signed decompositions; Barvinok algorithm; chamber decomposition; complexity in
  fixed dimension}
\begin{document}

\begin{abstract}
  Computations with Barvinok's short rational generating functions are
  traditionally being performed in the dual space, to avoid the combinatorial complexity
  of inclusion--exclusion formulas for the intersecting proper faces of cones.
  We prove that, on the level of indicator functions of polyhedra, 
  there is no need for using inclusion--exclusion formulas to
  account for boundary effects:  All linear identities in the space of
  indicator functions can be purely expressed using half-open variants of the
  full-dimensional polyhedra in the identity.  This gives rise to a
  practically efficient, 
  parametric Barvinok algorithm in the primal space.
\end{abstract}

\maketitle

\section{Introduction}

We consider a family of polytopes $P_{\ve q} = \{\,\ve x\in\R^d: A\ve x \leq
\ve q\,\}$ parameterized by a right-hand side vector $\ve q\in Q\subseteq\R^m$,
where the set of right-hand sides is restricted to some polyhedron~$Q$. 
For this family of polytopes, we define the
\emph{parametric counting function} $c\colon Q \to \N$ by
\begin{equation}
  \label{eq:parametric-counting-function}
  c(\ve q) = \# \bigl( P_{\ve q} \cap \Z^d \bigr).
\end{equation}
Note that this includes vector partition functions
$
c(\ve \lambda) = \# \{\, \ve x \in \N^d : A' \ve x = \ve \lambda \,\}
$
as a special case.
It is well-known that the counting function \eqref{eq:parametric-counting-function}
is a piecewise quasipolynomial function.  
We are interested in computing an efficient algorithmic representation of the
function that allows to efficiently evaluate $c(\ve q)$ for any given~$\ve q$.
This paper builds on various techniques described in the literature, which we
review in the following.

\subsection{Barvinok's short rational generating functions}
The foundation of our method is an algorithmically efficient calculus of
\emph{rational generating
  functions} of the integer points in polyhedra developed by 
\citet{Barvinok94}; see also~\cite{BarviPom}.
Let $P = P_{\ve q}\subseteq\R^d$ be a rational polyhedron.  The
\emph{generating function} of~$P\cap\Z^d$ is defined as the formal Laurent series
\begin{displaymath}
  \tilde g_P(\ve z) = \sum_{\vealpha\in P\cap\Z^d} \ve z^{\vealpha}
\in \Z[[z_1,\dots,z_d, z_1^{-1},\dots,z_d^{-1}]],
\end{displaymath}
using 
the multi-exponent notation $\ve z^{\vealpha} = \prod_{i=1}^d z_i^{\alpha_i}$.
If $P$ is bounded, $\tilde g_P$ is a Laurent polynomial, which we
consider as a rational function~$g_P$.  If $P$ is not
bounded but is pointed (i.e., $P$ does not contain a straight line), there is
a non-empty open subset $U\subseteq\C^d$ such that the series converges
absolutely and uniformly on every compact subset of~$U$ to a rational
function~$g_P$.  If $P$ contains a straight line, we set $g_P = 0$.
The rational function $g_P\in\Q(z_1,\dots,z_d)$ defined in this way is called
the \emph{rational generating function} of~$P\cap\Z^d$.

By Brion's Theorem \cite{Brion88}, the rational generating function of a
polyhedron is the sum of the rational generating functions of its vertex
cones.  Thus the computation of a rational generating function can be reduced
to the case of \emph{polyhedral cones}.  
Moreover, the mapping $P \mapsto g_P$ is a 
\emph{valuation}: Let $[P]$ denote the \emph{indicator
  function} of~$P$, i.e., the function 
\begin{equation*}
  [P]\colon \R^d\to\R, \quad
  [P](\ve x) = 
  \begin{cases}
    1 & \text{if $\ve x\in P$} \\
    0 & \text{otherwise}.
  \end{cases}
\end{equation*}
The valuation property is that any (finite) linear identity 
\begin{math}
  \sum_{i\in I} \epsilon_i [P_i] = 0
\end{math}
with $\epsilon_i\in\Q$
carries over to a linear identity 
\begin{math}
  \sum_{i\in I} \epsilon_i\, g_{P_i}(\ve z) = 0
\end{math}.  Hence, it is possible to use the inclusion--exclusion principle to break a
polyhedral cone into pieces and to add and subtract
the resulting generating
functions.  Indeed, by triangulating the vertex cones, one can reduce the
problem to the case of \emph{simplicial cones}. 

By elimination of variables it suffices to consider the case of
\emph{full-di\-men\-sion\-al} simplicial cones, i.e., cones $C\subseteq\R^d$
generated by $d$~linearly independent ray vectors $\ve b_1,\dots,\ve
b_d\in\Z^d$.  The \emph{index} of such a cone is defined as the index of the
point lattice generated by $\ve b_1,\dots,\ve b_d$ in the standard
lattice~$\Z^d$; we have $\ind C = \bigl|\det (\ve b_1, \dots, \ve b_d)\bigr|$.
Using Barvinok's \emph{signed decomposition technique}, it is possible to
write a cone as
\begin{displaymath}
  [C] = \sum_{i\in I_1} \epsilon_i [C_i] + \sum_{i\in I_2} \epsilon_i [C_i]
  \quad\text{with $\epsilon_i \in\{\pm1\}$},
\end{displaymath}
with at most $d$ full-dimensional simplicial cones $C_i$ of
lower index
in the sum over $i\in I_1$
and $\mathrm O(2^d)$  lower-dimensional simplicial cones $C_i$
in the sum over $i\in I_2$.  
The lower-dimensional cones arise due to the inclusion--exclusion principle
applied to the intersecting faces of the full-dimensional cones.
The signed decomposition is then recursively applied to the cones~$C_i$, until
one obtains unimodular (index~1) cones, for which the rational generating function can
be written down trivially.
Since the indexes of the full-dimensional cones descend quickly enough at each
level of the decomposition, one can prove the depth of the decomposition tree is doubly
logarithmic in the index of the input 
cone.  This gives rise to a polynomiality result \emph{in fixed dimension}: 
\begin{theorem}[\citet{Barvinok94}]
  Let the dimension~$d$ be fixed.  There exists a polynomial-time algorithm for computing 
  the rational generating function of a polyhedron $P\subseteq\R^d$ given by rational
  inequalities. 
\end{theorem}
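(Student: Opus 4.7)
The plan is to assemble the ingredients already discussed in the excerpt into a complete algorithm and bound its running time by induction on the dimension. First I would compute the vertices of~$P$ and their vertex cones in polynomial time; in fixed dimension this is possible, e.g., by enumerating bases of the defining inequality system or by an LP-based approach. Applying Brion's theorem reduces the task to computing the rational generating function $g_{K_v}$ of each (translated) vertex cone~$K_v$, and summing. Next I would triangulate each vertex cone into full-dimensional simplicial cones, using any triangulation procedure (e.g., the Delaunay-type triangulation by lifting). In fixed dimension the number of simplices produced is bounded by a polynomial in the number of rays, and the triangulation can be recorded as a valuation identity $[K_v] = \sum_j \epsilon_j [S_j] + (\text{lower-dim.\ intersection terms})$, which via the valuation property carries over to the generating functions.

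The crux of the proof is the analysis of the signed decomposition. Given a full-dimensional simplicial cone $C = \cone(\ve b_1,\dots,\ve b_d)$ of index $\Delta = \ind C > 1$, one invokes Minkowski's convex body theorem (algorithmically realized by lattice basis reduction, e.g., the LLL algorithm) to produce an integer vector $\ve w\in\Z^d\setminus\{0\}$ lying in the half-open parallelepiped spanned by $\ve b_1,\dots,\ve b_d$ with all barycentric coordinates of absolute value at most $\Delta^{-1/d}$. Replacing in turn $\ve b_i$ by $\ve w$ yields at most $d$ full-dimensional simplicial cones $C_i$ together with $\mathrm O(2^d)$ lower-dimensional simplicial cones arising from inclusion--exclusion on their intersecting faces, and satisfying the identity displayed in the excerpt with $\epsilon_i \in \{\pm1\}$. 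The key estimate is that each new full-dimensional index satisfies $\ind C_i \le \Delta^{(d-1)/d}$, so after $k$ iterations the largest index is at most $\Delta^{((d-1)/d)^k}$; hence $k = \mathrm O(\log\log\Delta)$ iterations suffice before every full-dimensional descendant has index~$1$, and for such unimodular cones the rational generating function is written down explicitly from the ray vectors.

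To finish, I would set up a double induction: an outer induction on the dimension~$d$, handling the lower-dimensional cones produced by inclusion--exclusion, and an inner induction (realized by the depth bound above) on the index within each fixed dimension. In fixed dimension~$d$, the branching factor at each node of the decomposition tree is bounded by a constant depending only on~$d$, the depth is $\mathrm O(\log\log\Delta)$, so the total number of cones produced is polynomial in the binary encoding size of the input; recording each rational function as a sum of terms of the form $\ve z^{\ve a}/\prod_i(1-\ve z^{\ve c_i})$ keeps the output size polynomial as well. Combining everything and invoking the valuation property once at the end yields the desired polynomial-time algorithm. The main obstacle I would expect is verifying carefully that the short vector~$\ve w$ furnished by LLL really gives the claimed index drop~$\Delta^{(d-1)/d}$ and that all boundary lower-dimensional cones are handled within the same complexity bound via the outer induction on~$d$.
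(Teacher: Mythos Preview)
Your proposal is correct and follows essentially the same approach that the paper sketches in the paragraphs preceding the theorem (Brion's theorem, triangulation, Barvinok's signed decomposition with doubly-logarithmic depth in the index); indeed the paper does not give a self-contained proof but cites the result from \citet{Barvinok94} and only outlines the method. Your additional details---the explicit index drop $\Delta^{(d-1)/d}$ via Minkowski/LLL and the outer induction on~$d$ for the lower-dimensional inclusion--exclusion terms---are standard and match Barvinok's original argument; the paper separately notes that in practice one avoids the lower-dimensional cones altogether via the polarization trick, but that is an implementation improvement, not part of the polynomiality proof.
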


Despite the polynomiality result, the algorithm was widely considered to be
practically inefficient because too many, $\mathrm{O}(2^d)$, lower-dimensional
cones had to be created at every level of the decomposition.  Later the
algorithm was improved by making use of Brion's ``polarization trick'',
see~\cite{Brion88} and~\citep[Remark 4.3]{BarviPom}:
The computations with rational generating
functions are invariant with respect to the contribution of non-pointed cones
(cones containing a non-trivial linear subspace).  The reason is that the
rational generating function of every non-pointed cone is zero.  By operating
in the dual space, i.e., by computing with the polars of all cones,
lower-dimensional cones can be safely discarded, because this is equivalent to
discarding non-pointed cones in the primal space.  Thus at each level of the
decomposition, only at most $d$ cones are created.  This \emph{dual variant}
of Barvinok's algorithm has efficient implementations in
LattE~\citep{latte-1.2,latte1,latte2} and the library \texttt{barvinok}~%
\citep{barvinok-manual-noversion}.

\subsection{Parametric polytopes and generating functions}
\label{s:parametric:polytope}

The vertices of a parametric polytope
$P_{\ve q} = \{\,\ve x\in\R^d: A\ve x \leq \ve q\,\}$, with
$\ve q\in Q\subseteq\R^m$ are affine functions of the parameters $\ve q$
and can be computed as follows.
A set $B$ of $d$ linearly independent rows of the inequality system $A\ve
x\leq\ve q$ is called a \emph{simplex basis}.  The associated \emph{basic
  solution} $\ve x(B)$ is the unique solution of the equation $A_B\ve x = \ve
q_B$.
Note that different simplex bases may give rise to the same basic solution.
A simplex basis (and the corresponding basic solution) is called \emph{(primal)
  feasible} if $A\ve x(B) \leq \ve q$ holds for some $\ve q \in Q$.
The vertices of $P_{\ve q}$ correspond to the feasible basic
solutions and they are said to be {\em active} on the subset of $Q$
for which the basic solutions are feasible.

A \emph{chamber} of the parameterized inequality system $A\ve x\leq\ve q$ is an
inclusion-maximal set of right-hand side vectors~$\ve q$ that have the same
set of primal feasible simplex bases.
The chamber complex of
$P_{\ve q}$ is the common refinement of the projections into $Q$
of the $n$-faces of the polyhedron
$\hat P = \{\, (\ve x, \ve q) \in \R^d \times Q : A\ve x \le \ve q\,\}$,
where $n$ is the dimension of the projection of $\hat P$ onto $Q$%
~\cite{Loechner97parameterized,verdoolaege-woods-2005}.
Alternatively, the problem may be translated into
a vector partition problem, for which the chambers can be
computed either directly~\cite{Baldoni2001counting}
or as the regular triangulations of its
Gale transform~\cite{Gelfand1994,Pfeifle2003}.
However, these alternative computations, discussed in more
detail in~\cite{Eisenschmidt2007integrally,barvinok-manual-noversion},
may lead to many chambers that do not meet $Q$ and
that hence have to be discarded.

Within each (open) chamber of the chamber complex,
the combinatorial type of $P_{\ve q}$
remains the same and Barvinok's algorithm can be applied to the vertices
active on the chamber~\cite[Theorem~5.3]{BarviPom}.
As we will explain in more detail in \autoref{s:gf},
the result is a parametric rational generating function where the
parameters only appear in the numerator.
In practice, it is sufficient to apply Barvinok's algorithm in the closures
of the chambers of maximal dimension~\cite[Section~4.2]{Brion1997residue}.
On intersections of these closures
one obtains possibly different representations of the same
parametric rational generating function.

\begin{example}
As a trivial example, consider the one-dimensional
parametric polytope
$
P_q = \{\, x \in \R^1: x \geq 0, \;
2 x \leq  q + 6 ,\;
x \leq q \,\}
$.
Its vertices are $0$, $q/2 + 3$ and $q$, active on $\{\, q \ge 0 \,\}$,
$\{\, q \ge 6 \,\}$ and $\{\, q \le 6 \,\}$, respectively.
The full-dimensional (open) chambers are $\{\, 0 < q < 6 \,\}$
and $\{\, q > 6 \,\}$ and the resulting parametric counting function is
$$
c(q) =
\begin{cases}
q+1 & \hbox{if $0 \le q \le 6$}
\\
\floor{\frac{q}2}  + 4  & \hbox{if $6 \le q$}
.
\end{cases}
$$
\end{example}

As in the non-parametric case, $P_{\ve q}$ can be assumed to be full-dimensional
for all parameter values in the chambers of maximal dimension.
Note that a reduction to the full-dimensional case may involve a
reduction of the parameters to the standard lattice%
~\cite{verdoolaege-et-al:counting-parametric,Meister2004PhD}.
This parametric version of the dual variant of Barvinok's algorithm
has also been implemented in \texttt{barvinok}~\cite{barvinok-manual-noversion}
and is explained in more detail in~\cite{verdoolaege-et-al:counting-parametric,%
verdoolaege-woods-2005,verdoolaege-wood-bruynooghe-cools-2005}.

\subsection{Irrational decompositions and primal algorithms}
Recently, \citet{beck-sottile:irrational} introduced \emph{irrational triangulations} of
polyhedral cones as a technique for obtaining simplified proofs for theorems
on generating functions.  Let $\ve v + C \subseteq\R^d$ be a
full-dimensional affine polyhedral cone; it can be triangulated into simplicial
full-dimensional cones $\ve v + C_i$.  Then there exists a vector $\ve{\tilde
  v}\in\R^d$ such that 
\begin{equation}
  (\ve{\tilde v} + C) \cap \Z^d = (\ve v + C) \cap \Z^d
\end{equation}
and
\begin{equation}
  \label{eq:integrally-empty-boundary}
  \boundary (\ve{\tilde v} + C_i) \cap \Z^d = \emptyset, 
\end{equation}
that is, the affine cones $\ve{\tilde v} + C_i$ do not have any \emph{integer
  points} in common. Thus, without using the inclusion--exclusion principle,
one obtains an identity on the level of generating functions,
\begin{equation}
  g_{\ve v + C}(\ve z) = g_{\ve{\tilde v} + C}(\ve z)  = \sum_i g_{\ve{\tilde
      v} + C_i}(\ve z). 
\end{equation}

\citet{koeppe:irrational-barvinok} considered both irrational triangulations
and \emph{irrational signed decompositions}.  He constructed a \emph{uniform}
irrational shifting vector $\ve{\tilde v}$ which ensures that
\eqref{eq:integrally-empty-boundary}~holds for all cones $\ve{\tilde v} + C_i$
that are created during the course of the recursive Barvinok decomposition
method.  The implementation of this method in a version of
LattE~\cite{latte-macchiato} was the first practically efficient variant of 
Barvinok's algorithm that works in the primal space. 

The benefits of a decomposition in the primal space are twofold.  First, it
allows to effectively use the method of \emph{stopped decomposition}
\citep{koeppe:irrational-barvinok}, where the recursive decomposition of the
cones is stopped before unimodular cones are obtained.  For certain classes of
polyhedra, this technique reduces the running time by several orders of
magnitude.

Second, for some classes of polyhedra such as the cross-polytopes, it is
prohibitively expensive to compute triangulations of the vertex cones in the
dual space.  An \emph{all-primal algorithm}
\citep{koeppe:irrational-barvinok} that computes both triangulations and
signed decompositions in the primal space is therefore able to handle problem
instances that cannot be solved with a dual algorithm in reasonable time.

\subsection{The contribution of this paper.}
The irrationalization technique of
\cite{beck-sottile:irrational,koeppe:irrational-barvinok} can be viewed as a method
of translating an \emph{inexact identity} (i.e., an identity modulo the
contribution of lower-dimensional cones) of indicator functions of
full-dimensional cones, 
\begin{equation}
  \label{eq:source-identity-with-mod}
  \sum_{i\in I} \epsilon_i [\ve v_i + C_i] \equiv 0
  \pmod{\text{lower-dimensional cones}}
\end{equation}
to an exact identity of rational generating functions,
\begin{equation}
  \sum_{i\in I} \epsilon_i\, g_{\ve{\tilde v}_i + C_i}(\ve z) = 0.
\end{equation}
We remark that this identity is not valid on the level of indicator functions.
In contrast, in \autoref{s:identities} we provide a general constructive method of translating an inexact
identity~\eqref{eq:source-identity-with-mod} of 
indicator functions of full-dimensional cones to an exact identity of
indicator functions of full-dimensional \emph{half-open} cones,
\begin{equation}
  \sum_{i\in I} \epsilon_i [\ve v_i + \tilde C_i]  = 0,
\end{equation}
without increasing the number of summands in the identity. 

This general result gives rise to methods of exact
polyhedral subdivision  of polyhedral cones (\autoref{s:subdivision}) and exact signed
decomposition of half-open simplicial cones (\autoref{s:decomposition}).

Since the rational generating function of half-open simplicial cones of low
index can be written down easily (\autoref{s:gf}), we obtain new primal
variants of Barvinok's algorithm.  The new variants have simpler
implementations than the 
primal irrational variant \cite[Algorithm~5.1]{koeppe:irrational-barvinok} 
and the all-primal irrational variant
\cite[Algorithm~6.4]{koeppe:irrational-barvinok} 
because computations with large rational numbers can be replaced by simple,
combinatorial rules.  

The new variants based on exact decomposition in the primal space
are particularly useful for \emph{parametric} problems.  The reason is that
the method of constructing the half-open polyhedral cones only depends on the 
facet normals and is independent from the location of the parametric vertex.
In contrast, the irrationalization technique needs to shift the parametric
vertex by a vector~$\ve s$ which needs to depend on the parameters.  This is
of particular importance for the case of the irrational all-primal algorithm,
where the irrational shifting vector~$\ve s$ needs to be constructed by solving a
parametric linear program. 

Moreover, the technique of exact decomposition can also be applied to the
parameter space~$Q$, obtaining a partition into half-open chambers~$\tilde
Q_i$. This gives rise to useful new representations of the parametric
generating function $g_{P_{\ve q}}(\ve z)$ (\autoref{s:representations}) and
the counting function~$c(\ve q)$ (\autoref{s:specialization}).
We also introduce algorithmic representations of $g_{P_{\ve q}}(\ve z)$ and
$c(\ve q)$ that make use of half-open activity domains of the parametric
vertices.  Its benefit is that it is of polynomial size and has polynomial
evaluation time even when the dimension~$m$ of the parameter space varies.
\smallbreak

Taking all together, we obtain the first practically efficient parametric
Barvinok algorithm in the primal space.

\section{Exact triangulations and signed decompositions into half-open polyhedra}

\subsection{Identities in the algebra of indicator functions, or:
  Inclusion--exclusion is not hard for boundary effects}
\label{s:identities}

We first show that identities of indicator functions of full-dimensional
polyhedra modulo lower-dimensional polyhedra can be translated to \emph{exact}
identities of indicator functions of full-dimensional half-open polyhedra. 

\begin{theorem}
  \label{th:exactify-identities}
  Let 
  \begin{equation}
    \label{eq:full-source-identity}
    \sum_{i\in I_1} \epsilon_i [P_i] + \sum_{i\in I_2} \epsilon_i [P_i] = 0
  \end{equation}
  be a (finite) linear identity of indicator functions of closed
  polyhedra~$P_i\subseteq\R^d$, where the
  polyhedra~$P_i$ are full-dimensional for $i\in I_1$ and 
  lower-dimensional for $i\in I_2$, and where $\epsilon_i\in\Q$.  Let each closed polyhedron be given as 
  \begin{align}
    P_i &= \bigl\{\, \ve x : \langle \ve b^*_{i,j}, \ve x\rangle \leq \beta_{i,j} \text{
      for $j\in J_i$}\,\bigr\}.
  \end{align}
  Let $\ve y\in\R^d$ be a vector such that $\langle \ve b^*_{i,j}, \ve
  y\rangle \neq 0$ for all $i\in I_1\cup I_2$, $j\in J_i$.
  For $i\in I_1$, we define the half-open polyhedron
  \begin{equation}
    \label{eq:half-open-by-y}
    \begin{aligned}
      \tilde P_i = \Bigl\{\, \ve x\in\R^d : {}& \langle \ve b^*_{i,j}, \ve x\rangle \leq \beta_{i,j}
      \text{ for $j\in J_i$ with $\langle \ve b^*_{i,j}, \ve y \rangle < 0$,} \\
      & \langle \ve b^*_{i,j}, \ve x\rangle < \beta_{i,j}
      \text{ for $j\in J_i$ with $\langle \ve b^*_{i,j}, \ve y \rangle > 0$} \,\Bigr\}.
    \end{aligned}
  \end{equation}
  Then 
  \begin{equation}
    \label{eq:target-identity}
    \sum_{i\in I_1} \epsilon_i [\tilde P_i] = 0.
  \end{equation}
\end{theorem}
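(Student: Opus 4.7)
The plan is to prove \eqref{eq:target-identity} pointwise by a perturbation argument: evaluate at any $\ve x\in\R^d$ and compare the values of the indicator functions at the shifted point $\ve x+t\ve y$ for sufficiently small $t>0$. The identity \eqref{eq:full-source-identity} gives
\begin{equation*}
  \sum_{i\in I_1}\epsilon_i[P_i](\ve x+t\ve y)+\sum_{i\in I_2}\epsilon_i[P_i](\ve x+t\ve y)=0,
\end{equation*}
so it suffices to show (i) that the second sum vanishes for all small enough $t>0$, and (ii) that each term in the first sum equals $\epsilon_i[\tilde P_i](\ve x)$ for small enough $t>0$. Since there are only finitely many constraints in total, a common threshold $t_0>0$ can be chosen to make both conclusions hold simultaneously, and the claim follows.

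For step (ii), I would analyze each defining inequality $\langle\ve b^*_{i,j},\ve x\rangle\leq\beta_{i,j}$ of $P_i$ separately. If the inequality is strict or strictly violated at $\ve x$, it remains so at $\ve x+t\ve y$ for all small $t$, which matches both $P_i$ and $\tilde P_i$. The only interesting case is when equality holds at $\ve x$: then $\langle\ve b^*_{i,j},\ve x+t\ve y\rangle=\beta_{i,j}+t\langle\ve b^*_{i,j},\ve y\rangle$, whose sign for $t>0$ is opposite to that of $\langle\ve b^*_{i,j},\ve y\rangle$. This gives exactly the rule of \eqref{eq:half-open-by-y}: the boundary point is included iff $\langle\ve b^*_{i,j},\ve y\rangle<0$, which matches the closed/open distinction built into $\tilde P_i$. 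Thus $[\tilde P_i](\ve x)=[P_i](\ve x+t\ve y)$ for sufficiently small $t>0$.

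For step (i), the key observation is that each lower-dimensional $P_i$ ($i\in I_2$) has a nonempty set of \emph{implicit equalities}, i.e.\ a nonempty subset $J'_i\subseteq J_i$ of indices $j$ for which $\langle\ve b^*_{i,j},\ve x\rangle=\beta_{i,j}$ holds on all of $P_i$; the affine hull of $P_i$ is cut out by these equalities. By hypothesis $\langle\ve b^*_{i,j},\ve y\rangle\neq 0$ for every such $j$, so $\ve y$ is transversal to the affine hull of $P_i$, and the line $\{\ve x+t\ve y:t\in\R\}$ meets this affine hull in at most one point. In particular there exists $t_i>0$ such that $\ve x+t\ve y\notin P_i$ for all $t\in(0,t_i)$, so $[P_i](\ve x+t\ve y)=0$ for all small $t>0$.

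The main obstacle is a minor bookkeeping point in step (i): verifying that the direction $\ve y$ is genuinely transversal to every lower-dimensional $P_i$ using only the hypothesis that $\langle\ve b^*_{i,j},\ve y\rangle\neq 0$ for the generators given in the representation, rather than some abstract normal. This reduces to the standard polyhedral fact that the lineality-free part of the affine hull of $P_i$ is spanned by the implicit-equality constraints drawn from $\{\ve b^*_{i,j}\}_{j\in J_i}$, at which point the hypothesis applies directly. Taking $t$ smaller than the minimum of the finitely many threshold values from steps (i) and (ii) then yields $\sum_{i\in I_1}\epsilon_i[\tilde P_i](\ve x)=0$ at the arbitrary point $\ve x$, proving \eqref{eq:target-identity}.
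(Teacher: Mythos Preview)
Your proposal is correct and follows essentially the same perturbation argument as the paper: fix $\ve x$, slide along the ray $\ve x+t\ve y$, and observe that for all sufficiently small $t>0$ the lower-dimensional $P_i$ are missed while $[P_i](\ve x+t\ve y)=[\tilde P_i](\ve x)$ for the full-dimensional ones. The paper organizes the same computation slightly differently (it first shows $\sum_{i\in I_1}\epsilon_i[\tilde P_i](\ve x_\lambda)$ is constant near $\lambda=0$ and then identifies it with the original identity on a punctured interval), but the content is identical; your direct verification of $[\tilde P_i](\ve x)=[P_i](\ve x+t\ve y)$ is arguably cleaner. One small wording slip: in the equality case the sign of $\langle\ve b^*_{i,j},\ve x+t\ve y\rangle-\beta_{i,j}=t\langle\ve b^*_{i,j},\ve y\rangle$ is the \emph{same} as that of $\langle\ve b^*_{i,j},\ve y\rangle$, not opposite; your stated conclusion (boundary included iff $\langle\ve b^*_{i,j},\ve y\rangle<0$) is nonetheless correct.
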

\begin{proof}
  We will show that \eqref{eq:target-identity} holds for an arbitrary~$\ve{\bar
    x}\in\R^d$. To this end, fix an arbitrary $\ve{\bar x}\in\R^d$.  We define
  \begin{displaymath}
    \ve x_\lambda = \ve{\bar x} + \lambda\ve y\quad\text{for $\lambda\in[0,+\infty)$}.
  \end{displaymath}
  Consider the function
  \begin{displaymath}
    f\colon [0,+\infty)\ni\lambda \mapsto \biggl(\sum_{i\in I_1} \epsilon_i
    [\tilde P_i]\biggr)(\ve x_\lambda).
  \end{displaymath}
  We need to show that $f(0) = 0$.
  To this end, we first show that $f$ is constant in a neighborhood of~$0$.

  First, let $i\in I_1$ such that $\ve{\bar x} \in \tilde P_i$.  
  For $j\in J_i$ with $\langle \ve b^*_{i,j}, \ve y\rangle < 0$, 
  we have $\langle \ve b^*_{i,j}, \ve{\bar x} \rangle \leq \beta_{i,j}$, thus $\langle \ve
  b^*_{i,j}, \ve{x}_\lambda \rangle \leq \beta_{i,j}$. 
  For $j\in J_i$ with $\langle \ve b^*_{i,j}, \ve y\rangle > 0$, we 
  have $\langle \ve b^*_{i,j}, \ve{\bar x} \rangle < \beta_{i,j}$, thus $\langle \ve
  b^*_{i,j}, \ve{x}_\lambda \rangle < \beta_{i,j}$ for $\lambda>0$ small enough.
  Hence, $\ve{x}_\lambda \in \tilde P_i$ for $\lambda>0$ small enough.

  Second, let $i\in I_1$ such that $\ve{\bar x} \notin \tilde P_i$.  
  Then either there exists a $j\in J_i$ with
  $\langle \ve b^*_{i,j}, \ve y\rangle < 0$ and $\langle \ve b^*_{i,j},
  \ve{\bar x}\rangle > \beta_{i,j}$.  Then $\langle \ve
  b^*_{i,j}, \ve{x}_\lambda \rangle > \beta_{i,j}$ for $\lambda>0$ small enough.
  Otherwise, there exists a $j\in J_i$ with
  $\langle \ve b^*_{i,j}, \ve y\rangle > 0$ and $\langle \ve b^*_{i,j},
  \ve{\bar x}\rangle \geq \beta_{i,j}$.  Then $\langle \ve
  b^*_{i,j}, \ve{x}_\lambda \rangle \geq \beta_{i,j}$.
  Hence, in either case, $\ve{x}_\lambda \notin \tilde P_i$ for $\lambda>0$
  small enough.\smallbreak

  Next we show that $f$ vanishes on some interval $(0, \lambda_0)$. 
  We consider the function 
  \begin{displaymath}
    g\colon [0,+\infty)\ni\lambda \mapsto \biggl(\sum_{i\in I_1} \epsilon_i
    [P_i] + \sum_{i\in I_2} \epsilon_i [P_i]\biggr)(\ve x_\lambda)
  \end{displaymath}
  which is constantly zero by~\eqref{eq:full-source-identity}.  
  Since $[P_i](\ve x_\lambda)$ for $i\in I_2$ vanishes on all
  but finitely many $\lambda$, we have 
  \begin{displaymath}
    g(\lambda) = \biggl(\sum_{i\in I_1} \epsilon_i
    [P_i]\biggr) (\ve{x}_\lambda)
  \end{displaymath}
  for $\lambda$ from some interval $(0,\lambda_1)$.  Also, $[P_i](\ve x_\lambda)
  = [\tilde P_i](\ve x_\lambda)$ for some interval $(0, \lambda_2)$.  Hence
  $f(\lambda) = g(\lambda) = 0$ for some interval $(0, \lambda_0)$.\smallbreak
  
  Hence, since $f$ is constant in a neighborhood of~$0$, it is also zero at
  $\lambda=0$.
  Thus the identity~\eqref{eq:target-identity} holds for~$\ve{\bar x}$.
\end{proof}

\begin{remark}
  \autoref{th:exactify-identities} can be easily generalized to a situation
  where the weights $\epsilon_i$ are not constants but
  continuous real-valued functions.  In the proof, rather than showing that
  $f$ is constant in a neighborhood of~$0$, one shows that $f$ is continuous at~$0$.
\end{remark}

\subsection{The exact polyhedral subdivision of a closed polyhedral cone}
\label{s:subdivision}

For obtaining an exact polyhedral subdivision of a full-dimensional closed polyhedral
cone $C = \cone\{\ve b_1,\dots,\ve b_n\}$, 
\begin{displaymath}
  [C] = \sum_{i\in I_1} [\tilde C_i],
\end{displaymath}
we apply the above theorem using an arbitrary vector $\ve y\in \interior C$
that avoids all facets of the cones~$C_i$,
for instance 
\begin{displaymath}
  \ve y = \sum_{i=1}^n (1+\gamma^i) \ve b_i
\end{displaymath}
for a suitable~$\gamma>0$.

\subsection{The exact signed decomposition of half-open simplicial cones}
\label{s:decomposition}

Let $\tilde C\subseteq\R^d$ be a half-open simplicial full-dimensional cone
with the double description 
  \begin{align}
    \tilde C &= \Bigl\{\, \ve x\in\R^d : {}
    \langle \ve b^*_{j}, \ve x\rangle \leq 0
    \text{ for $j\in J_\leq$ and}\  
    \langle \ve b^*_{j}, \ve x\rangle < 0
    \text{ for $j\in J_<$} \,\Bigr\} \\
\label{eq:explicit}
    \tilde C&= \Bigl\{\, \textstyle\sum_{j=1}^d \lambda_j \ve b_j :{} 
    \lambda_j \geq 0 \text{ for $j\in J_\leq$ and} \ 
  \lambda_j > 0 \text{ for $j\in J_<$} 
  \,\Bigr\}
  \end{align}
where $J_< \cup
J_{\leq} = \{1,\dots,d\}$, with the \emph{biorthogonality property} for the
outer normal vectors $\ve b^*_j$ and the ray vectors~$\ve b_i$,
\begin{equation}
  \label{eq:biorthogonality}
  \langle \ve b^*_j, \ve b_i \rangle = -\delta_{i,j}
  =
  \begin{cases}
    -1 & \text{if $i=j$}, \\
    0  & \text{otherwise}.
  \end{cases}
\end{equation}
In the following we introduce a generalization of Barvinok's \emph{signed decomposition}
\citep{Barvinok94} to half-open simplicial cones~$C_i$, which will give an exact
identity of half-open cones.  To this end, we first compute the usual signed
decomposition of the closed cone $C = \closure {\tilde C}$, 
\begin{equation}\label{eq:signed-decomposition-inexact-identity}
  [C] \equiv \sum_{i}\epsilon_i [C_i] \pmod{\text{lower-dimensional
      cones}} 
\end{equation}
using an extra ray~$\ve w$, which has the representation
\begin{equation}
  \ve w = \sum_{i=1}^d \alpha_i \ve b_i \quad\text{where $\alpha_i =
    -\langle \ve b^*_i, \ve w \rangle$.}
\end{equation}
Each of the cones $C_i$ is spanned by
$d$~vectors from the set $\{\ve b_1, \dots, \ve b_d, \ve w\}$.  The signs
$\epsilon_i\in\{\pm1\}$ are determined according to the location of~$\ve w$,
see \citep{Barvinok94}.

An exact identity
\begin{displaymath}
  [\tilde C] = \sum_{i}\epsilon_i [\tilde C_i] \quad\text{with $\epsilon\in\{\pm1\}$},
\end{displaymath}
can now be obtained from~\eqref{eq:signed-decomposition-inexact-identity}
as follows.  We define cones~$\tilde C_i$ that are half-open counterparts
of~$C_i$.  We only need to determine which of the defining inequalities of the
cones $\tilde C_i$ should be strict.  
To this end, we first show how to construct a vector~$\ve y$ that characterizes which defining
inequalities of~$\tilde C$ are strict by the means of~\eqref{eq:half-open-by-y}.
\begin{lemma}
  \label{lemma:reverse-engineering-y}
  Let
  \begin{equation}
    \label{eq:sigma}
    \sigma_i =
    \begin{cases}
      1 & \text{for $i\in J_\leq$,} \\
      -1 & \text{for $i\in J_<$,}
    \end{cases}
  \end{equation} 
  and let $\ve y \in R = \interior\cone\{\, \sigma_1 \ve b_i, \dots,
  \sigma_d \ve b_d\}$ be arbitrary.
  Then
  \begin{align*}
    J_{\leq} &= \bigl\{\, j\in\{1,\dots,d\} : 
    \langle \ve b^*_{j}, \ve y \rangle < 0 \,\bigr\}, \\
    J_{<} &= \bigl\{\, j\in\{1,\dots,d\} : 
    \langle \ve b^*_{j}, \ve y \rangle > 0 \,\bigr\}.
  \end{align*}
\end{lemma}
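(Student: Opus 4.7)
The plan is to use the biorthogonality property \eqref{eq:biorthogonality} to convert the statement into an elementary calculation with the barycentric-type coordinates of $\ve y$ in the basis $\ve b_1,\dots,\ve b_d$.

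First, I would unpack the hypothesis that $\ve y\in R=\interior\cone\{\sigma_1\ve b_1,\dots,\sigma_d\ve b_d\}$. Since $\ve b_1,\dots,\ve b_d$ are linearly independent, so are $\sigma_1\ve b_1,\dots,\sigma_d\ve b_d$, and a point is in the interior of the cone they generate precisely when all its coordinates in this basis are strictly positive. Hence I can write
\begin{equation*}
  \ve y = \sum_{i=1}^d \mu_i\,\sigma_i\,\ve b_i \qquad\text{with $\mu_i > 0$ for all $i\in\{1,\dots,d\}$.}
\end{equation*}

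Next, I would plug this expansion into $\langle \ve b^*_j,\ve y\rangle$ and apply biorthogonality \eqref{eq:biorthogonality}:
\begin{equation*}
  \langle \ve b^*_j,\ve y\rangle = \sum_{i=1}^d \mu_i\,\sigma_i\,\langle \ve b^*_j,\ve b_i\rangle
  = -\sum_{i=1}^d \mu_i\,\sigma_i\,\delta_{i,j} = -\mu_j\,\sigma_j.
\end{equation*}
Since $\mu_j>0$, the sign of $\langle \ve b^*_j,\ve y\rangle$ is exactly the opposite of~$\sigma_j$.

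Finally, I would read off the two cases from the definition \eqref{eq:sigma} of $\sigma_j$: if $j\in J_\leq$ then $\sigma_j=1$, so $\langle \ve b^*_j,\ve y\rangle=-\mu_j<0$; if $j\in J_<$ then $\sigma_j=-1$, so $\langle \ve b^*_j,\ve y\rangle=\mu_j>0$. Since $J_\leq$ and $J_<$ partition $\{1,\dots,d\}$, these two inclusions give equalities of sets, which is exactly the claim. There is no real obstacle here; the lemma is essentially a one-line computation whose only subtlety is justifying the positivity $\mu_i>0$ from membership in the \emph{interior} of the cone, which I would handle by invoking linear independence of the $\sigma_i\ve b_i$.
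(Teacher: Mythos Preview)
Your proof is correct and is essentially identical to the paper's own argument: the paper likewise writes $\ve y$ as a positive combination of the $\sigma_i\ve b_i$ (it just splits the sum over $J_\leq$ and $J_<$ instead of carrying the $\sigma_i$), applies biorthogonality, and reads off the signs.
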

We remark that the construction of such a vector~$\ve y$ is not possible for
a half-open non-simplicial cone in general.
\begin{proof}[Proof of \autoref{lemma:reverse-engineering-y}]
  Such a $\ve y$ has the representation 
  \begin{displaymath}
    \ve y = \sum_{i\in J_\leq} \lambda_i
    \ve b_i - \sum_{i\in J_<} \lambda_i \ve b_i
    \quad\text{ with $\lambda_i>0$}.
  \end{displaymath}
  Thus
  \begin{displaymath}
    \langle \ve b^*_{j}, \ve y \rangle
    =
    \begin{cases}
      -\lambda_j & \text{for $j\in J_{\leq}$,} \\
      +\lambda_j & \text{for $j\in J_{<}$,}
    \end{cases}
  \end{displaymath}
  which proves the claim.
\end{proof}

Now let $\ve y \in R$ be an arbitrary vector that is not orthogonal to any of
the facets of the cones~$\tilde C_i$.   Then such a vector~$\ve y$ can determine
which of the defining inequalities of the cones~$\tilde C_i$ are strict.  \smallbreak

In the following, we give a specific construction of such a vector~$\ve y$.
To this end,
let $\ve b_m$ be the unique ray of~$\tilde C$ that is not a ray of~$\tilde
C_i$.  Then we denote by $\ve{\tilde b}^*_{0,m}$ the outer normal vector of
the unique facet of~$\tilde C_i$ not incident to~$\ve w$.
Now consider any facet~$F$ of a
cone~$\tilde C_i$ that is incident to~$\ve w$.  Since $\tilde
C_i$~is simplicial, there is exactly one ray of~$\tilde C_i$, say~$\ve b_l$,
not incident to~$F$.  The outer normal vector of the facet is
therefore characterized up to scale by the indices $l$ and $m$; thus we denote it by
$\ve{\tilde b}^*_{l,m}$.
See \autoref{fig:decomposition} for an example of this naming convention.

Let $\ve b_0 = \ve w$.  Then, for every outer normal vector~$\ve{\tilde
  b}^*_{l,m}$ and every ray~$\ve b_i$, $i = 0,\dots, d$, we have 
\begin{equation}
  \label{eq:signed-decomp:extra-facet-scalar-signs}
  \beta_{i;l,m} := -\langle \ve{\tilde b}^*_{l,m}, \ve b_i \rangle
  \begin{cases}
    > 0 & \text{for $i = l$,} \\
    = 0 & \text{for $i \neq l, m$,} \\
    \in \R & \text{for $i = m$.} 
  \end{cases}
\end{equation}
Now the outer normal vector has the representation
\begin{equation*}
  \ve{\tilde b}^*_{l,m} = \sum_{i=1}^d \beta_{i;l,m} \ve b^*_i.
\end{equation*}
The conditions of~\eqref{eq:signed-decomp:extra-facet-scalar-signs} determine
the outer normal vector~$\ve{\tilde b}^*_{l,m}$ up to scale.  
For the normals~$\ve{\tilde b}^*_{0,m}$, we can choose
\begin{equation}
  \ve{\tilde b}^*_{0,m} = \alpha_m \ve b^*_m.
\end{equation}
For the other facets~$\ve{\tilde b}^*_{l,m}$, 
we can choose
\begin{equation}
   \ve{\tilde b}^*_{l,m} 
   = \abs{\alpha_m} \ve b^*_l  -\sign\alpha_m \cdot \alpha_l \ve b^*_m.
\end{equation}

Now consider
\begin{equation}
  \label{eq:signed-decomp:lexico-y-revised}
  \ve y = \sum_{i=1}^d \sigma_i (\abs{\alpha_i} + \gamma^i) \ve b_i,
\end{equation}
which lies in the cone~$R$ for every $\gamma>0$.
We obtain
\begin{equation}
    \langle \ve{\tilde b}^*_{0,m}, \ve y\rangle 
= - \sigma_m \alpha_m (\abs{\alpha_m} + \gamma^m)
    \label{eq:signed-decomp:extra-facet-scalar-with-betas:0}
\end{equation}
and
\begin{align}
    \langle \ve{\tilde b}^*_{l,m}, \ve y\rangle 
    &= \abs{\alpha_m} \langle \ve b^*_l, \ve y \rangle -\sign \alpha_m \cdot
    \alpha_l \langle \ve b^*_m, \ve y \rangle \notag\\
    &= - \abs{\alpha_m} \sigma_l (\abs{\alpha_l} + \gamma^l) 
    + \sign \alpha_m \cdot \alpha_l \sigma_m (\abs{\alpha_m} + \gamma^m)\notag\\
    &= ( \sign(\alpha_l\alpha_m)\sigma_m - \sigma_l) \abs{\alpha_l}
    \abs{\alpha_m} \notag\\
    &\qquad - \sigma_l \abs{\alpha_m} \gamma^l
    + \sign(\alpha_l \alpha_m) \sigma_m \abs{\alpha_l} \gamma^m,
    \label{eq:signed-decomp:extra-facet-scalar-with-betas}
  \end{align} 
for $l \ne 0$.
  The right-hand side of~\eqref{eq:signed-decomp:extra-facet-scalar-with-betas},
as a polynomial in~$\gamma$, only has finitely many roots.  Thus there are
only finitely 
many values of~$\gamma$ for which a scalar product $\langle \ve{\tilde b}^*_{l,m}, \ve y
\rangle$ can vanish for any of the finitely many facet normals $\ve{\tilde b}^*_{l,m}$.  
Let $\gamma>0$ be an arbitrary number for which none of the scalar products
vanishes.  Then the vector $\ve y$ defined by~\eqref{eq:signed-decomp:lexico-y-revised}
determines which of the defining inequalities of the cones $\tilde C_i$ should
be strict. 

\begin{remark}
  It is possible to construct an a-priori vector $\ve y$ that is suitable
  to determine which definining inequalities are strict for all the cones that
  arise in the hierarchy of triangulations and signed decompositions of a cone
  $C = \cone\{\ve b_1, \dots, \ve b_n\}$ in
  Barvinok's algorithm.  The construction uses the methods
  from~\cite{koeppe:irrational-barvinok}.  
  Let $0<r\in\Z$ and $\ve{\hat y}\in\frac1r\Z^d$ and such that the open cube $\ve{\hat y}
  + B_\infty(\frac 1r)$ is contained in $C$. (For instance, choose $\ve{\hat
    y} = \sum_{i=1}^n \ve b_i$ and choose $r$ large enough.) 
  Let $D$ be an upper bound on the determinant of any simplicial cone that
  can arise in a triangulation of~$C$, for instance
  \begin{equation}
    \label{eq:triangulated-index-bound}
    D = \left( \max\nolimits_{i=1}^n \norm{\ve b_i}^2 \right)^{n/2}
  \end{equation}
  by Lemma~16 of~\cite{koeppe:irrational-barvinok}. 
  Let $C = \max_{i=1}^n \norm{\ve b_i}_\infty$.  Using the data from
  Theorem~11 of~\cite{koeppe:irrational-barvinok}, 
  \begin{equation*}
    k = \floor{1 + \frac{ \log_2 \log_2 D }{\log_2 \frac{d}{d-1} } },
    \quad M = 2 (d-1)! ( d^{k} C )^{d-1}, 
  \end{equation*}
  we define
  \begin{displaymath}
    \ve s = \frac1r \cdot\left(\frac1{(2M)^1},\frac1{(2M)^2}, \dots, \frac1{(2M)^d} \right).
  \end{displaymath}
  Finally let $\ve{y} = \ve{\hat y} + \ve s$.  Then $\langle \ve b^*, \ve y
  \rangle \neq 0$ for any of the facet normals $\ve b^*$ that can arise in the
  hierarchy of triangulations and signed decompositions of the cone~$C$.
\end{remark}

\begin{remark}
  For performing the exact signed decomposition in a software implementation,
  it is not actually necessary to construct the vector~$\ve y$ and to evaluate
  scalar products.  In the following, we show that we can devise simple,
  ``combinatorial'' rules to decide  which defining
  inequalities should be strict.    To this end, let $\gamma > 0$ in~\eqref{eq:signed-decomp:lexico-y-revised}
  be small
  enough that none of the signs
  \begin{displaymath}
    \sigma_{l,m} = -\sign \langle \ve{\tilde b}^*_{l,m}, \ve y\rangle
  \end{displaymath}
  given by~\eqref{eq:signed-decomp:extra-facet-scalar-with-betas}
  change if $\gamma$ is decreased even more.  We can now determine 
  $\sigma_{l,m}$ for all possible cases.\smallbreak

  \noindent\textit{Case 0: $\alpha_m = 0$.} The cone would be
  lower-dimensional in this case, since $\ve w$ lies in the space spanned by
  the ray vectors except $\ve b_m$, and is hence discarded.

  \noindent\textit{Case 1: $l = 0$.}
    From \eqref{eq:signed-decomp:extra-facet-scalar-with-betas:0}, we have
    $$
	\sigma_{0,m} = \sign(\alpha_m) \sigma_m
    .
    $$
  
  \noindent\textit{Case 2: $l \ne 0$, $\alpha_l = 0$, $\alpha_m \neq 0$.}  Here we have $\langle \ve{\tilde b}^*_{l,m},
  \ve y\rangle = - \sigma_l \abs{\alpha_m} \gamma^l$, thus
  \begin{displaymath}
    \sigma_{l,m} = \sigma_l.
  \end{displaymath}
  
  \noindent\textit{Case 3: $l \ne 0$, $\alpha_l \alpha_m > 0$.} 
  In this case \eqref{eq:signed-decomp:extra-facet-scalar-with-betas}
  simplifies to
  \begin{equation}
    \label{eq:case2}
    \langle \ve{\tilde b}^*_{l,m}, \ve y\rangle 
    = ( \sigma_m - \sigma_l) \abs{\alpha_l} \abs{\alpha_m}
    - \sigma_l \abs{\alpha_m} \gamma^l
    + \sigma_m \abs{\alpha_l} \gamma^m.
  \end{equation}
    
  \noindent\textit{Case 3\,a: $\sigma_l = \sigma_m$.}  Here the first term
  of~\eqref{eq:case2} cancels, so
  \begin{displaymath}
    \sigma_{l,m} = -\sign \langle \ve{\tilde b}^*_{l,m}, \ve y\rangle =
    \begin{cases}
      1 & \text{if $l < m$,} \\
      -1  & \text{if $l > m$.}
    \end{cases}
  \end{displaymath}
  \noindent\textit{Case 3\,b: $\sigma_l \neq \sigma_m$.}
  Here the first term of~\eqref{eq:case2} dominates, so 
  \begin{displaymath}
    \sigma_{l,m} = -\sign \langle \ve{\tilde b}^*_{l,m}, \ve y\rangle = \sigma_l.
  \end{displaymath}

  \noindent\textit{Case 4: $l \ne 0$, $\alpha_l \alpha_m < 0$.}  
  In this case \eqref{eq:signed-decomp:extra-facet-scalar-with-betas}
  simplifies to
  \begin{equation}
    \label{eq:case3}
    \langle \ve{\tilde b}^*_{l,m}, \ve y\rangle 
    = - ( \sigma_m + \sigma_l) \abs{\alpha_l} \abs{\alpha_m}
    - \sigma_l \abs{\alpha_m} \gamma^l
    - \sigma_m \abs{\alpha_l} \gamma^m.
  \end{equation}
  
  \noindent\textit{Case 4\,a: $\sigma_l = \sigma_m$.}
  Here the first term
  of~\eqref{eq:case3} dominates, so
  \begin{displaymath}
    \sigma_{l,m} = \sigma_l = \sigma_m.
  \end{displaymath}

  \noindent\textit{Case 4\,b: $\sigma_l \neq \sigma_m$.}
  Here the first term
  of~\eqref{eq:case3} cancels, so
  \begin{displaymath}
    \sigma_{l,m} = -\sign \langle \ve{\tilde b}^*_{l,m}, \ve y\rangle =
    \begin{cases}
      \sigma_l & \text{if $l< m$,} \\
      \sigma_m & \text{if $l> m$.}
    \end{cases}
  \end{displaymath}

\begin{example}
Consider (the vertex figure of) the three-dimensional
cone on the left of~\autoref{fig:decomposition}.
The open and closed facets can be described as
$$
\sigma_1 = -1 \qquad \sigma_2 = 1 \qquad \sigma_3 = -1
,
$$
while the extra ray~$\ve w = \sum_{i=1}^d \alpha_i \ve b_i$
is such that
$$
\alpha_1 < 0 \qquad \alpha_2 > 0 \qquad \alpha_3 > 0
.
$$
For the facets of the cones in the decomposition we have
\begin{align*}
\sigma_{0,3} & \stackrel{1}{=} \sigma_3 = -1 &
\qquad
\sigma_{0,2} & \stackrel{1}{=} \sigma_2 = 1 &
\qquad
\sigma_{0,1} & \stackrel{1}{=} -\sigma_1 = 1
\\
\sigma_{1,3} & \stackrel{4a}{=} \sigma_1 = -1 &
\qquad
\sigma_{1,2} & \stackrel{4b}{=} \sigma_1 = -1 &
\qquad
\sigma_{2,1} & \stackrel{4b}{=} \sigma_1 = -1
\\
\sigma_{2,3} & \stackrel{3b}{=} \sigma_2 = 1 &
\qquad
\sigma_{3,2} & \stackrel{3b}{=} \sigma_3 = -1 &
\qquad
\sigma_{3,1} & \stackrel{4a}{=} \sigma_1 = -1
.
\end{align*}
The result is shown on the right of~\autoref{fig:decomposition}.

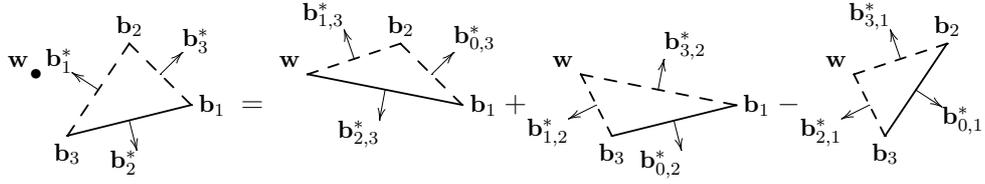
\begin{figure}
\newdimen\intercol
\intercol=0.48cm
$$
\begin{xy}
<\intercol,0pt>:<0pt,\intercol>::
\POS(0,0)*[*0.85]\xybox{
\POS(-2,-1)="b3"*+!U{\ve b_3}
\POS(2,0)="b1"*+!L{\ve b_1}
\POS(0,2)="b2"*+!D{\ve b_2}
\POS(-3,1)="w"*+!DR{\ve w}
\POS(-3,1)="w"*{\bullet}
\POS"b3"\ar@[|(2)]@{-}"b1"
\POS?(0.5)="a",?(0.5)/-\intercol/="b","a"
*\xybox{"b"-"a":(0,0)\ar(0,1)\POS(0,1)*!RU{\ve b_2^*}}%
\POS"b1"\ar@[|(2)]@{--}"b2"
\POS?(0.5)="a",?(0.5)/-\intercol/="b","a"
*\xybox{"b"-"a":(0,0)\ar(0,1)\POS(0,1)*!LD{\ve b_3^*}}%
\POS"b2"\ar@[|(2)]@{--}"b3"
\POS?(0.5)="a",?(0.5)/-\intercol/="b","a"
*\xybox{"b"-"a":(0,0)\ar(0,1)\POS(0,1)*!RD{\ve b_1^*}}%
}
\end{xy}
=
\begin{xy}
<\intercol,0pt>:<0pt,\intercol>::
\POS(0,0)*[*0.85]\xybox{
\POS(2,0)="b1"*+!L{\ve b_1}
\POS(0,2)="b2"*+!D{\ve b_2}
\POS(-3,1)="w"*+!DR{\ve w}
\POS"b1"\ar@[|(2)]@{--}"b2"
\POS?(0.5)="a",?(0.5)/-\intercol/="b","a"
*\xybox{"b"-"a":(0,0)\ar(0,1)\POS(0,1)*!LD{\ve b_{0,3}^*}}%
\POS"b2"\ar@[|(2)]@{--}"w"
\POS?(0.5)="a",?(0.5)/-\intercol/="b","a"
*\xybox{"b"-"a":(0,0)\ar(0,1)\POS(0,1)*!RD{\ve b_{1,3}^*}}%
\POS"w"\ar@[|(2)]@{-}"b1"
\POS?(0.5)="a",?(0.5)/-\intercol/="b","a"
*\xybox{"b"-"a":(0,0)\ar(0,1)\POS(0,1)*!RU{\ve b_{2,3}^*}}%
}
\end{xy}
+
\begin{xy}
<\intercol,0pt>:<0pt,\intercol>::
\POS(0,0)*[*0.85]\xybox{
\POS(-2,-1)="b3"*+!U{\ve b_3}
\POS(2,0)="b1"*+!L{\ve b_1}
\POS(-3,1)="w"*+!DR{\ve w}
\POS"b3"\ar@[|(2)]@{-}"b1"
\POS?(0.5)="a",?(0.5)/-\intercol/="b","a"
*\xybox{"b"-"a":(0,0)\ar(0,1)\POS(0,1)*!RU{\ve b_{0,2}^*}}%
\POS"b1"\ar@[|(2)]@{--}"w"
\POS?(0.5)="a",?(0.5)/-\intercol/="b","a"
*\xybox{"b"-"a":(0,0)\ar(0,1)\POS(0,1)*!LD{\ve b_{3,2}^*}}%
\POS"w"\ar@[|(2)]@{--}"b3"
\POS?(0.5)="a",?(0.5)/-\intercol/="b","a"
*\xybox{"b"-"a":(0,0)\ar(0,1)\POS(0,1)*!RU{\ve b_{1,2}^*}}%
}
\end{xy}
-
\begin{xy}
<\intercol,0pt>:<0pt,\intercol>::
\POS(0,0)*[*0.85]\xybox{
\POS(-2,-1)="b3"*+!U{\ve b_3}
\POS(0,2)="b2"*+!D{\ve b_2}
\POS(-3,1)="w"*+!DR{\ve w}
\POS"b3"\ar@[|(2)]@{-}"b2"
\POS?(0.5)="a",?(0.5)/-\intercol/="b","a"
*\xybox{"b"-"a":(0,0)\ar(0,1)\POS(0,1)*!LU{\ve b_{0,1}^*}}%
\POS"b2"\ar@[|(2)]@{--}"w"
\POS?(0.5)="a",?(0.5)/-\intercol/="b","a"
*\xybox{"b"-"a":(0,0)\ar(0,1)\POS(0,1)*!RD{\ve b_{3,1}^*}}%
\POS"w"\ar@[|(2)]@{--}"b3"
\POS?(0.5)="a",?(0.5)/-\intercol/="b","a"
*\xybox{"b"-"a":(0,0)\ar(0,1)\POS(0,1)*!RU{\ve b_{2,1}^*}}%
}
\end{xy}
$$
\caption{Signed decomposition of a half-open $3$-dimensional simplicial cone.
Each cone is represented by a vertex figure.
Closed facets ($\sigma = 1$) are shown in solid lines,
while open facets ($\sigma = -1$) are shown in broken lines.}
\label{fig:decomposition}
\end{figure}

\end{example}

  Finally we remark that other constructions of~$\ve y$ are possible, giving rise
  to different combinatorial rules.  For instance, the implementation
  \texttt{barvinok}~\citep{barvinok-manual-noversion} uses a set of rules that
  correspond to a modification of~\eqref{eq:signed-decomp:lexico-y-revised},
  where for all $i\in J_\leq$ the coefficient $\gamma^i$ is replaced by $\gamma^{i+d}$.
  
\end{remark}

\section{Parametric Barvinok algorithm using exact decompositions in
the primal space}

In the previous section, we have shown how to both triangulate a closed
polyhedral cone (\autoref{s:subdivision}) and apply Barvinok's
decomposition (\autoref{s:decomposition}) in the primal space
without introducing (indicator functions of) lower-dimensional
polytopes.
The result is a signed sum of half-open simplicial cones.
The final remaining step in obtaining a generating function for a polytope
is therefore the computation of the generating function of such a cone.

\subsection{The generating function of a half-open simplicial rational cone}
\label{s:gf}

If $\ve v(\ve q) + C$ is a {\em closed\/} simplicial affine cone
where
$
C = \{\,
\sum_{j=1}^d \lambda_j \ve b_j : \lambda_j \ge 0
\,\}
$ with $\ve b_j\in\Z^d$,
then it is well known~\cite{stanley} that the generating function
$g_{\ve v(\ve q) + C}$ of $\ve v(\ve q) + C$
is
\begin{equation}
\label{eq:fundamental}
g_{\ve v(\ve q) + C}(\ve z)
= \frac{\sum_{\ve \alpha \in \Pi \cap \Z^d} \ve z^{\ve \alpha}}
		  {\prod_{j=1}^d \left(1 - \ve z^{\ve b_j}\right)}
,
\end{equation}
with the {\em fundamental parallelepiped\/} of $\ve v(\ve q) + C$,
\begin{displaymath}
\Pi = \ve v(\ve q) + \biggl\{\,
\sum_{j=1}^d \lambda_j \ve b_j : 0 \le \lambda_j < 1
\,\biggr\}
.
\end{displaymath}
For a half-open cone $\ve v(\ve q) + \tilde C$ given by~\eqref{eq:explicit},
the same formula holds with
$$
\Pi = \ve v(\ve q) + \biggl\{\,
\sum_{j=1}^d \lambda_j \ve b_j :
0 \le \lambda_j < 1 \text{  for $j\in J_\leq$ and }
0 < \lambda_j \le 1 \text{  for $j\in J_<$}
\,\biggr\}
.
$$

To enumerate all points in $\Pi \cap \Z^d$ and compute the numerator
of~\eqref{eq:fundamental},
we follow the technique of~\cite[Lemma 5.1]{barvinok-1993:exponential-sums}, 
which we adapt for the case of half-open cones.
\begin{lemma}
  Let $B$ be the matrix with the $\ve b_j$ as columns and let $S$ be the Smith
  normal form of $B$, i.e., $B V = W S$, with $V$ and $W$ unimodular matrices
  and $S$ a diagonal matrix $S = \diagonal \ve s$.  Then, 
  \begin{displaymath}
    \Pi \cap \Z^d =
    \{\, \ve \alpha(\ve k) : k_j \in \Z, 0 \le k_j < s_j \,\},
  \end{displaymath}
  with
  \begin{align*}
    \ve \alpha(\ve k) & = \ve v(\ve q) + \sum_{j \in J_\leq}
    \fractional{\inner{\ve b^*_j, \ve v(\ve q) - W \ve k}} \ve b_j + \sum_{j
      \in J_<} \cractional{\inner{\ve b^*_j, \ve v(\ve q) - W \ve k}} \ve b_j
    \\
    & = W \ve k - \sum_{j \in J_\leq} \floor{\inner{\ve b^*_j, \ve v(\ve q) -
        W \ve k}} \ve b_j - \sum_{j \in J_<} \ceil{\inner{\ve b^*_j, \ve v(\ve
        q) - W \ve k} - 1} \ve b_j ,
  \end{align*}
  with $\fractional{\cdot}$ the (lower) fractional part $\fractional{x} = x -
  \floor{x}$ and $\cractional{\cdot}$ the (upper) fractional part
  $\cractional{x} = x - \ceil{x-1} = 1 - \fractional{-x}$.
\end{lemma}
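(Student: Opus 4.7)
The plan is to adapt Barvinok's original argument for closed simplicial cones \cite[Lemma~5.1]{barvinok-1993:exponential-sums} to the half-open setting by keeping careful track of which defining inequalities of $\Pi$ are strict and which are non-strict.

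First I would verify that the two expressions for $\ve \alpha(\ve k)$ coincide. From the biorthogonality~\eqref{eq:biorthogonality}, one has $\sum_j \inner{\ve b^*_j, \ve x}\, \ve b_j = -\ve x$ for every $\ve x\in\R^d$; applying this to $\ve x = \ve v(\ve q) - W \ve k$ with $x_j := \inner{\ve b^*_j, \ve v(\ve q) - W \ve k}$ gives $\ve v(\ve q) + \sum_j x_j\, \ve b_j = W \ve k$, and substituting $\fractional{x_j} = x_j - \floor{x_j}$ together with $\cractional{x_j} = x_j - \ceil{x_j - 1}$ converts the first formula into the second.

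Next I would check that each $\ve \alpha(\ve k)$ lies in $\Pi \cap \Z^d$. Integrality is immediate from the second formula, since $W\ve k$ and the $\ve b_j$ are integer vectors and the rounding quantities $\floor{x_j}$, $\ceil{x_j-1}$ are integers. Membership in $\Pi$ is immediate from the first formula, which presents $\ve \alpha(\ve k) - \ve v(\ve q)$ as $\sum_j \mu_j \ve b_j$ with $\mu_j = \fractional{x_j} \in [0,1)$ for $j\in J_\leq$ and $\mu_j = \cractional{x_j} \in (0,1]$ for $j \in J_<$, exactly matching the defining constraints of $\Pi$ in~\eqref{eq:explicit}.

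The heart of the proof is the bijection between $\{\,\ve k \in \Z^d : 0 \le k_j < s_j\,\}$ and $\Pi \cap \Z^d$. For injectivity, I would observe that $\ve \alpha(\ve k) - W \ve k$ is an integer combination of the $\ve b_j$, so $\ve \alpha(\ve k) \equiv W \ve k \pmod{B\Z^d}$; using $B\Z^d = BV\Z^d = WS\Z^d$ (as $V$ is unimodular), the congruence $W\ve k \equiv W\ve k' \pmod{B\Z^d}$ is equivalent to $\ve k - \ve k' \in S\Z^d$, which is impossible for distinct $\ve k, \ve k'$ with $0 \le k_j, k'_j < s_j$. Surjectivity then follows by a cardinality count: there are $\prod_j s_j = \abs{\det S} = \abs{\det B}$ admissible parameters $\ve k$, while $\Pi$ (being a half-open fundamental domain for the lattice $B\Z^d$) contains exactly $\abs{\det B}$ integer points, so injectivity forces surjectivity. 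The main delicate point---and the one place where the proof diverges from the classical closed case---is to pair each defining inequality of $\Pi$ with the correct rounding operation, namely $\floor{\cdot}$ for $j\in J_\leq$ (where we need $\mu_j\in[0,1)$) and $\ceil{\cdot-1}$ for $j\in J_<$ (where we need $\mu_j\in(0,1]$), so that the unique representative of the coset $W\ve k + B\Z^d$ that falls inside $\Pi$ is precisely $\ve \alpha(\ve k)$.
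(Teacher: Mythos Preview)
Your proposal is correct and follows essentially the same approach as the paper: show each $\ve\alpha(\ve k)\in\Pi\cap\Z^d$, establish injectivity via the congruence $\ve\alpha(\ve k)\equiv W\ve k\pmod{B\Z^d}$ together with $B\Z^d=WS\Z^d$, and conclude surjectivity by the cardinality count $\prod_j s_j=\abs{\det S}=\abs{\det B}=\abs{\Pi\cap\Z^d}$. Your write-up is in fact more explicit than the paper's---you spell out the equivalence of the two formulas for $\ve\alpha(\ve k)$ and the reason $\Pi$ contains exactly $\abs{\det B}$ integer points---whereas the paper leaves these as ``clear.''
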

\begin{proof}
  It is clear that each $\ve \alpha(\ve k) \in \Pi \cap \Z^d$.  To see that
  all integer points in $\Pi$ are exhausted, note that $\det B = \det S$ and
  that all $\ve \alpha(\ve k)$ are distinct.  The latter follows from the fact
  that $\ve \alpha(\ve k)$ can be written as $\ve \alpha(\ve k) = W \ve k + B
  \ve \gamma = W \ve k + W S V^{-1} \ve \gamma$ for some $\ve \gamma \in
  \Z^d$.  If $\ve \alpha(\ve k_1) = \ve \alpha(\ve k_2)$, we must therefore
  have $\ve k_1 \equiv \ve k_2 \pmod{\ve s}$, i.e., $\ve k_1 = \ve k_2$.
\end{proof}

\subsection{Representations of the generating function of a parametric polytope}
\label{s:representations}

Let $Q_1,\dots,Q_k\subseteq Q$ be the chambers of the parameterized inequality
system~$A\ve x\leq\ve q$ of maximal dimension.  For all parameters $\ve q$ from any given
chamber~$Q_i$, the parametric polytope $P_{\ve q} = \{\,\ve x\in\R^d: A\ve x \leq \ve q\,\}$ has the same set of primal
feasible simplex bases.  Due to affine-linear dependencies in the set~$Q$ of
parameters, several primal feasible simplex bases can yield the same vertex of
the polytope~$P_{\ve q}$ on the whole chamber~$Q_i$.  By this mapping we obtain a
set~$V_i$ of parametric vertices $\ve v_j(\ve q)$ for $j\in V_i$ and
associated vertex cones $\ve v_j(\ve q) + C_j$.
Let us denote by $g_{\ve v_j(\ve q) + C_j}(\ve z)$ the parametric generating
function of the vertex cone at $\ve v_j(\ve q)$. 

 By Brion's
Theorem, we obtain the expression
\begin{equation}
  \label{eq:sum-over-active-vertices-1-open-chamber}
  g_{P_{\ve q}}(\ve z) 
  = \sum_{\ve v_j \in V_i} g_{\ve v_j(\ve q) + C_j}(\ve z)
\end{equation}
for the generating function of the parametric polytope
$P_{\ve q}$, valid for all parameters $\ve q\in Q_i$.
It turns out~\cite[Section~4.2]{Brion1997residue}
that the formula~\eqref{eq:sum-over-active-vertices-1-open-chamber} is also valid on the closure $\closure Q_i$ of
the chamber~$Q_i$.  In this way, we obtain the usual representation of the parametric
generating function as a \emph{piecewise function} defined on the whole parameter space~$Q$:
\begin{equation}
  \label{eq:piecewise}
g_{P_{\ve q}}(\ve z) =
\begin{cases}
\sum_{j \in V_{1}} g_{\ve v_j(\ve q) + C_j}(\ve z)
	& \text{if $\ve q \in \closure Q_1$}
\\
\quad\quad\quad\vdots
\\
\sum_{j \in V_{k}} g_{\ve v_j(\ve q) + C_j}(\ve z)
	& \text{if $\ve q \in \closure Q_k$}
.
\end{cases}
\end{equation}
As explained in \autoref{s:parametric:polytope}, this yields
possibly different expressions for values of $\ve q$ on the intersecting
boundaries of two or more chambers.\smallbreak

We are now interested in a different representation of the parametric generating
function, 
\begin{equation}
  \label{eq:piecewise-disjoint}
g_{P_{\ve q}}(\ve z) =
\begin{cases}
\sum_{j \in V_{1}} g_{\ve v_j(\ve q) + C_j}(\ve z)
	& \text{if $\ve q \in \tilde Q_1$}
\\
\quad\quad\quad\vdots
\\
\sum_{j \in V_{k}} g_{\ve v_j(\ve q) + C_j}(\ve z)
	& \text{if $\ve q \in \tilde Q_k$}
.
\end{cases}
\end{equation}
where the sets $\tilde Q_i$ form a \emph{partition} of the parameter space,
\begin{equation}
  \label{eq:Q-partition}
  Q = \tilde Q_1 \cup \dots \cup \tilde Q_k 
  \quad\text{with}\quad \tilde Q_i\cap \tilde Q_i' = \emptyset
  \text{ for $i\neq i'$}.
\end{equation}
The benefit of representation~\eqref{eq:piecewise-disjoint} is that it can be
rewritten in the form of a closed formula using indicator functions,
\begin{equation}
\label{eq:gf:chambers}
g_{P_{\ve q}}(\ve z) = \sum^k_{i=1} [\tilde Q_i](\ve q)
			\sum_{j \in V_{i}} g_{\ve v_j(\ve q) + C_j}(\ve z)
.
\end{equation}

Clearly, representations~\eqref{eq:piecewise-disjoint} and
\eqref{eq:gf:chambers} can be obtained by taking the chambers of \emph{all}
dimensions, since they form a partition of~$Q$.  However, we can do better:
\begin{lemma}
  We can construct representations \eqref{eq:piecewise-disjoint} and
  \eqref{eq:gf:chambers}, where $k$ is the number of chambers of~$A\ve x\leq
  \ve q$ of maximal dimension.  When the dimension~$d$ of the polytopes and
  the dimension~$m$ of the parameter space are fixed, the construction is
  possible in polynomial time.
\end{lemma}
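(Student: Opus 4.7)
The plan is to apply \autoref{th:exactify-identities} directly to the chamber subdivision of the parameter space. First I would compute the closures $\closure Q_1, \dots, \closure Q_k$ of the chambers of maximal dimension; their facet descriptions come from the finitely many vertex activity conditions of $A\ve x\leq \ve q$, and in fixed dimensions $d$ and $m$ both the number of such chambers and their facet descriptions can be obtained in polynomial time by the standard chamber-complex construction recalled in~\autoref{s:parametric:polytope}. These closures cover $Q$ and pairwise overlap only on lower-dimensional faces, which gives an inclusion--exclusion identity
\begin{equation*}
[Q] = \sum_{i} [\closure Q_i] - \sum_{i<j}[\closure Q_i \cap \closure Q_j] + \cdots,
\end{equation*}
a (finite) linear identity of indicator functions whose only full-dimensional summands are $Q$ itself (with sign $-1$ after rearranging) and the chamber closures $\closure Q_i$, while all remaining terms are supported on lower-dimensional polyhedra.

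Next I would choose a vector $\ve y \in \R^m$ that is generic with respect to all facet normals appearing in the identity (so that no scalar product $\langle \ve b^*_{i,j}, \ve y\rangle$ vanishes) and that additionally satisfies $\langle \ve b^*, \ve y\rangle < 0$ for every outer facet normal $\ve b^*$ of $Q$ itself. The second condition is equivalent to asking that $\ve y$ point into $Q$ from any boundary point, and such a vector can be built by starting from an interior point of (the recession cone of) $Q$ and adding a small rational perturbation that avoids the finitely many hyperplanes normal to the chamber facets, in complete analogy with the construction used in~\autoref{s:subdivision}.

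Applying \autoref{th:exactify-identities} to the identity above with this $\ve y$ yields the exact identity
\begin{equation*}
[\tilde Q] = \sum_{i} [\tilde Q_i],
\end{equation*}
where each $\tilde Q_i$ is the half-open polyhedron built from $\closure Q_i$ via~\eqref{eq:half-open-by-y}. By our choice of $\ve y$ every facet inequality of $Q$ stays non-strict, so $\tilde Q = Q$, and the $\tilde Q_i$ therefore form a partition of $Q$ as required by~\eqref{eq:Q-partition}. Since each $\tilde Q_i \subseteq \closure Q_i$ and~\eqref{eq:sum-over-active-vertices-1-open-chamber} remains valid on the closure of each maximal chamber, this yields the desired representations~\eqref{eq:piecewise-disjoint} and~\eqref{eq:gf:chambers} with $k$ equal to the number of maximal-dimensional chambers.

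The step I expect to require the most care is exactly the last point: one must ensure that the half-open rewriting does not also strip off part of the boundary of $Q$ itself, which is why $\ve y$ has to be chosen ``inward-pointing'' rather than merely generic, and this inward condition must be compatible with genericity against the chamber facet normals. Once this is arranged, the polynomial-time claim in fixed $d$ and $m$ is immediate: the chamber decomposition has polynomially many chambers, each with polynomially many facets, and deciding whether a given facet of $\tilde Q_i$ is strict reduces to evaluating the sign of a single scalar product $\langle \ve b^*_{i,j}, \ve y\rangle$.
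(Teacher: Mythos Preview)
Your proposal is correct and follows essentially the same route as the paper: apply \autoref{th:exactify-identities} to the identity $[Q]\equiv\sum_i[\closure Q_i]$ (modulo lower-dimensional faces) to turn the closed maximal chambers into half-open ones that partition~$Q$. The paper's proof is much terser: instead of constructing~$\ve y$ by perturbing an inward-pointing vector, it simply takes~$\ve y$ in the relative interior of one of the maximal chambers, exactly as in \autoref{s:subdivision}. Since the chamber walls (coming from the basis-feasibility conditions $A A_B^{-1}\ve q_B\leq\ve q$) are homogeneous linear inequalities, such a~$\ve y$ automatically avoids every chamber facet hyperplane \emph{and} lies in the interior of~$Q$, so it delivers both your genericity and your ``inward-pointing'' requirements at once. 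Your version makes the issue $\tilde Q=Q$ explicit, which the paper leaves implicit; note, however, that your recession-cone construction for~$\ve y$ presupposes that $Q$ has nonempty interior recession cone, so the paper's choice is both simpler and slightly more robust in the intended conic setting.
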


\begin{proof}
  Again, we can apply the technique of \autoref{th:exactify-identities} to 
  define half-open polyhedra~$\tilde Q_i$ that
  satisfy~\eqref{eq:Q-partition}, where~$\ve y$ is now an arbitrary vector
  from the relative interior of one of the chambers of maximal dimension.
  The complexity in fixed dimensions $m$ and $d$ follows from the fact that
  there are only polynomially many full-dimensional chambers in this case.  
\end{proof}

Note that the generating function of a parametric vertex may
appear multiple times in representation~\eqref{eq:gf:chambers} since a
vertex~$\ve v_j(\ve q)$ 
may be active on more than one chamber.  
The multiple occurrences can be removed by considering
the \emph{activity regions} 
\begin{displaymath}
  A_j = \{\, \ve q : A \ve v_j(\ve q) \le \ve q \,\}
\end{displaymath}
of individual vertices instead of the chambers.
Then, by introducing their half-open counterparts $\tilde A_j$ constructed
by~\autoref{th:exactify-identities}, 
we obtain another representation of the parametric generating function,
\begin{equation}
\label{eq:gf:vertices}
g_{P_{\ve q}}(\ve z) = \sum_{j\in V} [\tilde A_j](\ve q) \, g_{\ve v_j(\ve q)
  + C_j}(\ve z).
\end{equation}
where $V=V_1\cup\dots\cup V_k$ is the index set of all appearing parametric
vertices. 
One advantage of this representation is that it can be computed
in polynomial time, even if the dimension $m$ of the parameter space varies:
\begin{lemma}
  The representation~\eqref{eq:gf:vertices} can be constructed in polynomial
  time when the dimension~$d$ of the polytopes is fixed (but the dimension~$m$
  of the parameter space varies).
\end{lemma}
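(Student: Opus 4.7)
The plan is to enumerate parametric vertices directly and construct their half-open activity regions, never forming the chamber complex explicitly. First, enumerate all parametric vertices: each corresponds to a simplex basis~$B$, a choice of $d$~linearly independent rows of $A\ve x\leq\ve q$, and there are at most $\binom{n}{d}$ such bases (where $n$ is the number of rows of~$A$), which is polynomial in the input size for fixed~$d$. For each basis~$B$, compute the affine function $\ve v_B(\ve q) = A_B^{-1}\ve q_B$ and the closed activity region $A_B = \{\,\ve q\in Q : A\ve v_B(\ve q) \leq \ve q\,\}$ as a polyhedron in~$\R^m$ defined by $O(n)$ linear inequalities. Next, pick a single direction $\ve y\in\R^m$ that is not orthogonal to any facet normal of any of these activity regions; since there are only polynomially many normals in total, such a vector can be constructed in polynomial time by a lexicographic perturbation analogous to~\eqref{eq:signed-decomp:lexico-y-revised}. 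Finally, define $\tilde A_j$ by applying the half-open construction~\eqref{eq:half-open-by-y} of \autoref{th:exactify-identities} to $A_j$ with this choice of~$\ve y$, at cost $O(n)$ per vertex.

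To verify the identity~\eqref{eq:gf:vertices}, fix an arbitrary $\bar{\ve q}\in Q$ and set $\ve q_\lambda = \bar{\ve q} + \lambda\ve y$. Because $\ve y$ is non-orthogonal to every facet of every~$A_j$, and because chamber boundaries are subsets of such facets, $\ve q_\lambda$ lies in the interior of some full-dimensional chamber~$Q_{i^*}$ for all sufficiently small $\lambda>0$; crucially, the algorithm never needs to identify~$Q_{i^*}$ explicitly. Brion's theorem on this open chamber gives
\begin{equation*}
  g_{P_{\ve q_\lambda}}(\ve z) = \sum_{j\in V_{i^*}} g_{\ve v_j(\ve q_\lambda) + C_j}(\ve z),
\end{equation*}
and the continuation result \cite[Section~4.2]{Brion1997residue} extends this identity from~$Q_{i^*}$ to its closure $\closure Q_{i^*}\ni\bar{\ve q}$, with both sides continuous in~$\ve q$. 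On the other hand, the same neighborhood argument used in the proof of \autoref{th:exactify-identities} shows $[\tilde A_j](\bar{\ve q}) = [A_j](\ve q_\lambda)$ for all small $\lambda>0$, which in turn equals~$1$ exactly when $j\in V_{i^*}$. Combining these two observations yields~\eqref{eq:gf:vertices} at~$\bar{\ve q}$.

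The hardest part of the argument, and the whole point of the construction, is to avoid enumerating chambers, of which there can be exponentially many when $m$~varies. The half-open construction~\eqref{eq:half-open-by-y} is exactly the right tool: it is local to each individual activity region~$A_j$ and depends only on a single global direction~$\ve y$, so its total cost scales with the number of parametric vertices (polynomial in fixed~$d$) rather than with the combinatorics of the chamber complex.
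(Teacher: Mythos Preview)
Your proof is correct and follows the same approach as the paper: bound the number of parametric vertices by the number of simplex bases $\binom{n}{d}$, which is polynomial for fixed~$d$, and apply the half-open construction of \autoref{th:exactify-identities} to each activity region~$A_j$ individually. The paper's own proof is in fact a single sentence to this effect, deferring everything else to ``the above discussion''; you go well beyond this by spelling out the enumeration, the choice of~$\ve y$, and---most usefully---a direct verification of the identity~\eqref{eq:gf:vertices} via the perturbation $\ve q_\lambda = \bar{\ve q} + \lambda\ve y$, which the paper asserts but does not argue.
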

\begin{proof}
  This follows from the above discussion; the number of parametric vertices is
  polynomial when the dimension~$d$ of the polytopes is fixed and the
  dimension~$m$ of the parameter space varies.
\end{proof}

\subsection{From the generating function to the counting function}
\label{s:specialization}

After computing the parametric generating function 
$g_{P_{\ve q}}(\ve z)$ of $P_{\ve q}$,
an explicit representation of the parametric
counting function $c(\ve q) = \# ( P_{\ve q} \cap \Z^d )$
can be obtained by evaluating the generating function at $\ve 1$,
i.e., $c(\ve q) = g_{P_{\ve q}}(\ve 1)$.
Care needs to be taken in this evaluation since $\ve 1$ is a pole
of each term in $g_{P_{\ve q}}(\ve z)$.
One typically computes the constant terms of the Laurent expansions
of these rational functions;
see~\cite{Barvinok94,BarviPom,latte1,verdoolaege-et-al:counting-parametric}. 

Applying this process to \eqref{eq:gf:chambers} and \eqref{eq:gf:vertices},
one obtains the counting formulas
$$
c(\ve q) = \sum_{i=1}^k [\tilde Q_i](\ve q) \, \sum_{\ve v_j \in V_i}
c_{\ve v_j(\ve q) + C_j} 
$$
and
$$
c(\ve q) = \sum_{\ve v_j} [\tilde A_j](\ve q) \, c_{\ve v_j(\ve q) + C_j}
,
$$
where $c_{\ve v_j(\ve q) + C_j}$ is the sum of the constant terms in the
Laurent expansions of the terms in $g_{\ve v_j(\ve q) + C_j}(\ve z)$.

\subsection{The resulting algorithms}
\label{s:algorithm-summary}
The complete resulting algorithm, based on a chamber decomposition, is shown
below.  

\begin{algorithm}[Primal parametric Barvinok algorithm]\mbox{}\\
{\bf Input:} full-dimensional parametric polytope
$P_{\ve q} = \{\,\ve x\in\R^d: A\ve x \leq \ve q\,\}$,
with $\ve q\in Q\subseteq\R^m$; the maximum enumerated cone index~$\ell$
\\
{\bf Output:} parametric counting function
  $c\colon Q \to \N$ with $c(\ve q) = \# \bigl( P_{\ve q} \cap \Z^d \bigr)$
\begin{enumerate}
\item Compute the chamber decomposition ${\mathcal Q} \subset 2^Q$
of $P_{\ve q}$
and for each $Q_i \in {\mathcal Q}$ of maximal dimension, the corresponding
active vertices $V_i = \{\, \ve v_j(\ve {q}) \,\}_j$
(see \autoref{s:parametric:polytope})
\item Compute half-open chambers~$\tilde Q_i$ from~$Q_i$.
\item For each vertex cone
$\ve v_j(\ve {q}) + C_j$
of $P_{\ve q}$, with
$\ve v_j(\ve {q}) \in \bigcup_{Q_i \in {\mathcal Q}} V_i$
\begin{enumerate}
\item Triangulate $C_j$ into half-open full-dimensional simplicial
cones $[C_j] = \sum_k [\tilde C_{jk}]$
(see \autoref{s:subdivision})
\item For each $\tilde C_{jk}$,
apply Barvinok's signed decomposition into half-open full-dimensional
cones $[\tilde C_{jk}] = \sum_l \epsilon_{jkl}[\tilde C_{jkl}]$ of index
at most $\ell$
(see \autoref{s:decomposition})
\item For each $\tilde C_{jkl}$, write down the generating
function $g_{\ve v_j(\ve {q}) + \tilde C_{jkl}}(\ve z)$~\eqref{eq:fundamental}
of the affine cone $\ve v_j(\ve {q}) + \tilde C_{jkl}$
(see \autoref{s:gf})
\item Write down
$g_{\ve v_j(\ve {q}) + C_j}(\ve z) =
\sum_k \sum_l \epsilon_{jkl} g_{\ve v_j(\ve {q}) + \tilde C_{jkl}}(\ve z)$
\end{enumerate}
\item Write down the generating function
$g_{P_{\ve q}}(\ve z)$~\eqref{eq:gf:chambers}
of the parametric polytope $P_{\ve q}$
(see \autoref{s:representations})
\item Specialize the generating function $g_{P_{\ve q}}(\ve z)$
to obtain the counting function $c(\ve q) = g_{P_{\ve q}}(\ve 1)$
(see \autoref{s:specialization})
\end{enumerate}
\end{algorithm}
We omit the variation based on activity regions, as it is nearly
identical.


\bibliographystyle{plainnat}
\bibliography{barvinok,weismantel}

\end{document}
